\documentclass[11pt]{amsart}
\usepackage{amsmath,amssymb,amsthm,mathtools}
\usepackage[T1]{fontenc}
\usepackage{microtype}
\usepackage{hyperref}

\hypersetup{
  colorlinks=true,
  linkcolor=blue,
  citecolor=blue,
  urlcolor=blue
}

\newtheorem{theorem}{Theorem}[section]
\newtheorem{lemma}[theorem]{Lemma}
\newtheorem{proposition}[theorem]{Proposition}
\newtheorem{corollary}[theorem]{Corollary}
\newtheorem{question}[theorem]{Question}
\theoremstyle{remark}
\newtheorem{remark}[theorem]{Remark}
\theoremstyle{definition}

\newcommand{\Tr}{\operatorname{Tr}}
\newcommand{\rank}{\operatorname{rank}}

\newcommand{\id}{\mathrm I}

\title[Sharp Quasi-Reverse Minkowski Inequality]{Sharp Quasi-Reverse Minkowski Inequality for Schatten Norms}

\author{Hongsen Qiu}
\address{Institute for Advanced Study in Mathematics, Harbin Institute of Technology, Harbin, 150001, CHINA}
\email{chieughongsen@gmail.com}

\subjclass{}
\keywords{}

\begin{document}

\begin{abstract}
Let $\|\cdot\|_p$ denote the Schatten $p$-norm and let
$|A|=(A^*A)^{1/2}$.  For $2\leq p<\infty$, let $x_{p,m}>1$ be the
unique solution of $x_{p,m}^p=2x_{p,m}+m-1$, and set
\[
 C_{p,m}=\frac{\sqrt{x_{p,m}(x_{p,m}+m-1)}}{(x_{p,m}^p+m-1)^{1/p}}.
\]
We prove the sharp inequality
\[
 \|A_1+\cdots+A_m\|_p\leq C_{p,m}\bigl\||A_1|+\cdots+|A_m|\bigr\|_p
\]
for arbitrary complex matrices of arbitrary size.  Equivalently, if
$q=p/(p-1)$ and $R,X_1,\cdots,+X_m$ are positive semidefinite, then
\[
 \|RX_1\|_1+\cdots\|RX_m\|_1
 \leq C_{p,m}\|R\|_q\|X_1+\cdots X_m\|_p.
\]
For $1<p<2$, we also show that the formula proposed for the
optimal constant fails. We give both a numerical counterexample and a systematic analytic construction.
\end{abstract}

\maketitle

\section{Introduction}
For a complex matrix $A$, write
\[
 |A|=(A^*A)^{1/2}.
\]
Tang and Zhang asked for the optimal constant in
\[
 \|A_1+A_2+\cdots+A_m\|_p\leq C_{p,m}\bigl\||A_1|+|A_2|+\cdots+|A_m|\bigr\|_p,\quad p\geq 1.
\]
The $m=2,\,p=2$ case was conjectured in \cite{Lee2010} and proved in
\cite{LinZhang2022}; further proofs were given in
\cite{Zamani2023,Zhang2025}.  Tang and Zhang
\cite[Conjecture~3.1]{TangZhang2025} subsequently formulated the
optimal-constant problem for $m$ summands and proposed a closed formula
for $C_{p,m}$, which is
\begin{equation}\label{eq:Cp}
 C_{p,m}=\frac{\sqrt{x_{p,m}(x_{p,m}+m-1)}}{(x_{p,m}^p+m-1)^{1/p}},
 \quad x_{p,m}>1,\quad x_{p,m}^p=2x_{p,m}+m-1,
\end{equation}
where $C_{\infty,m}=\sqrt{m}$ by continuity. They verified the conjecture at $p=1,2,\infty$ and obtained a
general upper bound, but left the intermediate exponents open. One can also see the recent work \cite{Zhang2026}. 

In this paper, our
 main result confirms their formula for the
 range $p\geq2$.  However, in the range $1<p<2$,
 Section~\ref{sec:below-two} shows that the proposed formula fails for
 every exponent, explains a systematic counterexample
 construction, and records our observations. A concurrent work~\cite{ZengLiu2026} independently obtained a similar counterexample and proved the result for $m=2,\,p=4$.

\begin{theorem}\label{thm:main}
Let $2\leq p\leq \infty$ and $\mathcal{H}$ be a finite-dimensional Hilbert space of arbitrary dimensions. Let $A_1,\cdots,A_m\in \mathcal{B}(\mathcal{H})$, then
\begin{equation}\label{eq:main}
 \|A_1+\cdots+A_m\|_p\leq C_{p,m}\bigl\||A_1|+\cdots+|A_m|\bigr\|_p.
\end{equation}
The constant $C_{p,m}$ is given by
\eqref{eq:Cp} and is optimal over $\dim(\mathcal{H})<\infty$.
\end{theorem}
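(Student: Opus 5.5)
We would pass to the dual formulation stated in the abstract and prove that, then read off \eqref{eq:main}. Write the polar decompositions $A_i=U_i|A_i|$ and put $X_i=|A_i|\ge 0$. For $q=p/(p-1)$, duality gives
\[
\|\textstyle\sum A_i\|_p=\sup_{\|B\|_q\le1}\bigl|\Tr\bigl(B\textstyle\sum_i U_iX_i\bigr)\bigr|.
\]
Each term can be bounded by the Cauchy--Schwarz inequality for traces:
\[
|\Tr(BU_iX_i)|\le \bigl(\Tr(X_iR_i)\bigr)^{1/2}\bigl(\Tr(X_iS_i)\bigr)^{1/2},
\]
where $B=V|B|$ is a polar decomposition, $R_i$ comes from $|B|$ conjugated by $U_i$ and $V$, and $S_i=|B|$. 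To avoid tracking the mixed positive operators, the cleaner route is the reduction the abstract suggests. It suffices to prove
\[
\sum_i\|RX_i\|_1\le C_{p,m}\|R\|_q\Bigl\|\sum_i X_i\Bigr\|_p
\]
for positive $R$ and $X_i$. This is because $|\Tr(BU_iX_i)|\le \|X_i^{1/2}B U_i X_i^{1/2}\|_1$, and a standard $2\times2$ block (Hermitian dilation) argument then lets one replace $B$ by a positive $R$ with the same $q$-norm. The equivalence in both directions follows from the same duality.

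To prove the positive inequality, we would first reduce to the extremal configuration. Fix $X=\sum X_i$. Since $\|RX_i\|_1=\|X_i^{1/2}R^{1/2}\cdot R^{1/2}\|_1$, Cauchy--Schwarz gives
\[
\|RX_i\|_1\le \|R^{1/2}X_iR^{1/2}\|_1^{1/2}\,\|R\|_1^{1/2}\ \text{(schematically)}.
\]
A sharper version splits $R$ spectrally. We would write $\|RX_i\|_1\le \Tr(X_i^{1/2}RX_i^{1/2})^{1/2}\Tr(P_iRP_i\cdots)$ and aim to reduce to the commuting, rank-one-per-summand situation. The expected extremizer, suggested by the defining equation $x^p=2x+m-1$, is $X_i=v_iv_i^*$ with unit vectors $v_i$ having equal pairwise overlaps, and $R$ diagonal in the eigenbasis of $X$ with eigenvalue ratio $x$ between the top direction $\sum v_i$ and the orthogonal complement. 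We would then verify optimality by computing this example explicitly. The matrix $\sum v_iv_i^*$ has eigenvalues $(1+(m-1)t)$ and $(1-t)$ with multiplicity $m-1$, and optimizing over $t$ and the weights of $R$ yields $C_{p,m}$ through the stationarity equation $x^p=2x+m-1$. This gives the lower bound, i.e.\ the optimality claim.

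For the upper bound, the core step is a scalar/variational inequality. After the reductions (majorization via Lidskii/Ky Fan to treat $\|\sum X_i\|_p$ through eigenvalues, and a convexity argument in each $X_i$), we would reduce to rank-one $X_i=v_iv_i^*$. The left side is homogeneous in each $X_i$ and we would show it is concave along appropriate paths while the right side is convex, so the extremum sits at rank one. With rank-one $X_i$, $\|RX_i\|_1=\|R v_i\|\,\|v_i\|$, and the problem becomes a finite-dimensional inequality involving the Gram matrix $G=(\langle v_i,v_j\rangle)$ and $R$. We would apply Hölder to bound $\sum\|Rv_i\|\le \sqrt{m}\,(\Tr R^2\sum v_iv_i^*)^{1/2}$-type expressions, interpolated with weights, and then optimize. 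The \emph{main obstacle} is exactly this last scalar optimization: proving that for $p\ge2$ the symmetric equal-overlap configuration is the global maximizer. Here $p\ge2$ should enter through convexity of $t\mapsto t^{p/2}$, which permits a symmetrization (averaging over permutations of the $v_i$) step that fails for $p<2$, consistent with Section~\ref{sec:below-two}. We would try first a Lagrange multiplier analysis combined with a two-eigenvalue reduction, showing that the optimal $R$ has at most two distinct eigenvalues on the span of the $v_i$. The case $p=\infty$ follows by continuity.
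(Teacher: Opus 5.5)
\textbf{There is a genuine gap in the upper bound.} Your dual reformulation and your lower-bound example ($A_k=|\alpha\rangle\langle v_k|$ with equal overlaps $c_{p,m}$) agree with the paper, up to one detail. The extremal $R$ is the rank-one projection onto $\sum_k v_k$. The number $x_{p,m}=\frac{1+(m-1)c_{p,m}}{1-c_{p,m}}$ is the ratio of the eigenvalues of $\sum_k v_kv_k^*$, not an eigenvalue ratio of $R$.

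The failing step is the reduction to rank-one $X_i$. The map $(X_1,\dots,X_m)\mapsto\sum_i\|RX_i\|_1$ is convex, not concave. Convexity only pushes a maximum to extreme points of the constraint set. For fixed $S=\sum_iX_i$, those extreme points are the decompositions $X_i=S^{1/2}M_iS^{1/2}$ with $\{M_i\}$ an extreme POVM on the support of $S$. These include projective ones ($M_i$ orthogonal projections commuting with $S$) of arbitrary rank, so no rank-one reduction follows. Lidskii/Ky Fan majorization and averaging over permutations of the $v_i$ do not replace this. Your outline has no mechanism by which the high-rank case is controlled by the rank-one case.

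That mechanism is the heart of the paper's proof, and the difficulty sits elsewhere than you expect. Once $\rank S\le m$, the inequality is short: $\sum_k\|RX_k\|_1\le\sqrt{\Tr(R^2S)\Tr S}\le\|R\|_q\sqrt{s_1\Tr S}$, using $\|R\|_2\le\|R\|_q$ for $q\le2$. One then applies Jensen to $s_2,\dots,s_m$ and maximizes the one-variable function $h_{p,m}$. No Lagrange-multiplier or symmetrization analysis is needed.

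The hard part is getting to rank $\le m$, which the paper does on the matrix side:
\begin{itemize}
\item It cuts $Z=\sum_kA_k$ along its left singular vectors $u_i$, so each $Q_iA_k$ (with $Q_i=|u_i\rangle\langle u_i|$) has rank one.
\item It applies the rank-$m$ case to $X_{ik}=|Q_iA_k|$.
\item It glues the pieces with the square-function inequality $\sum_i\|\sum_kX_{ik}\|_p^p\le\|\sum_k(\sum_iX_{ik}^2)^{1/2}\|_p^p$ of Theorem~\ref{thm:sf}, using $\sum_iX_{ik}^2=|A_k|^2$.
\end{itemize}
That inequality holds for $p\ge2$. It is proved by interpolating the map $T\mapsto(K_a^*TK_a)_a$ between $\mathcal S_\infty$, where it is trivial, and $\mathcal S_2$, where it rests on the tensor inequality $\sum_iH_i\otimes\overline{H_i}\le H\otimes\overline H$.

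Some decoupling step of this kind is unavoidable. For $S=\id_n$ and $p>2$, the Cauchy--Schwarz bound equals $\|R\|_q\sqrt{s_1\Tr S}=n^{1/2-1/p}\|R\|_q\|S\|_p$, which exceeds $C_{p,m}\|R\|_q\|S\|_p$ once $n$ is large. So the rank must be cut down before any scalar optimization. This step, together with $\|R\|_2\le\|R\|_q$, is where $p\ge2$ really enters, not a symmetrization via convexity of $t\mapsto t^{p/2}$.
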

\begin{remark}
For the case $m=1$ or ${\rm dim}(\mathcal{H})=1$, the result trivially holds. Hence, throughout the paper we only discuss the case where $m\geq 2$ and ${\rm dim}(\mathcal{H})\geq 2$.
\end{remark}
\section{Preliminaries}
In this paper, we consider a finite-dimensional Hilbert space $\mathcal{H}$ and all the bounded linear operators $\mathcal{B}(\mathcal{H})$. We denote the set of all the positive semi-definite operators as $\mathcal{B}(\mathcal{H})_+$.

For $\mathcal{H}$, using the bra-ket notation, one can fix a complete orthonormal basis $\{|e_1\rangle,|e_2\rangle,\cdots,|e_d\rangle\}$. Moreover, for each $A\in \mathcal{B}(\mathcal{H})$, it has a faithful matrix representation
$$ A=\sum_{i,j=1}^{d} A_{i,j}|e_i\rangle \langle e_j|,\quad A_{i,j}\in \mathbb{C}.$$
Hence we get $\mathcal{B}(\mathcal{H})\cong M_d(\mathbb C)$. We also define
$$ \overline{A}=\sum_{i,j=1}^{d} \overline{A_{i,j}}|e_i\rangle \langle e_j|,\quad A^*=\sum_{i,j=1}^{d} \overline{A_{j,i}}|e_i\rangle \langle e_j|$$
where $A^*=\overline{A}^{T}$.

For $A$, the Schatten-$p$ norm is defined by
$$ \|A \|_p=\left(\Tr |A^*A|^{\frac{p}{2}}\right)^{\frac{1}{p}},\quad p\geq 1.$$
The Schatten-$p$ class is
$$ \mathcal{S}_p=\{A\in \mathcal{B}(\mathcal{H})\,:\, \|A \|_p<\infty\}.$$

\section{Auxiliary results}
We first prove the equivalent form of the conjectured inequality.
\begin{proposition}
\label{prop:m-summand-equivalence}
Let $1<p<\infty$ and 
\(
 q=p/(p-1)
\). Let $\mathcal H$ be a finite-dimensional Hilbert space of arbitrary dimensions. For a constant $C>0$, the following assertions are equivalent.

\noindent {\rm (i)} For 
$A_1,\ldots,A_m\in\mathcal B(\mathcal H)$,
\begin{equation}\label{eq:m-summand-matrix}
 \left\|\sum_{k=1}^mA_k\right\|_p
 \leq
 C\left\|\sum_{k=1}^m|A_k|\right\|_p.
\end{equation}

\noindent {\rm (ii)} For 
\(
 R,X_1,\ldots,X_m\in\mathcal B(\mathcal H)_+
\), 
one has
\begin{equation}\label{eq:m-summand-positive}
 \sum_{k=1}^m\|RX_k\|_1
 \leq
 C\|R\|_q
 \left\|\sum_{k=1}^mX_k\right\|_p.
\end{equation}

\end{proposition}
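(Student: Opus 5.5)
The plan is to prove both implications by trace duality, $\|S\|_p=\sup\{|\Tr(BS)| : \|B\|_q\le 1\}$, together with polar decompositions. Since $\mathcal H$ is finite-dimensional, every polar decomposition $T=W|T|$ can be taken with $W$ unitary, and $\|T\|_1=\Tr|T|=\Tr(W^*T)$.

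\textbf{(i)$\Rightarrow$(ii).} Fix $R,X_1,\dots,X_m\ge 0$. For each $k$, write $X_kR=W_k^*|X_kR|$ with $W_k$ unitary. Then
\[
\|RX_k\|_1=\|(RX_k)^*\|_1=\|X_kR\|_1=\Tr(W_kX_kR).
\]
Set $A_k=W_kX_k$. Since $W_k$ is unitary, $|A_k|=(X_kW_k^*W_kX_k)^{1/2}=X_k$. Summing and using Hölder's inequality for Schatten norms gives
\[
\sum_k\|RX_k\|_1=\Tr\Bigl(\Bigl(\sum_kA_k\Bigr)R\Bigr)\le\Bigl\|\sum_kA_k\Bigr\|_p\|R\|_q .
\]
Applying (i) bounds the right-hand side by $C\|\sum_k X_k\|_p\|R\|_q$, which is (ii).

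\textbf{(ii)$\Rightarrow$(i).} Let $S=\sum_kA_k\neq0$ and write $S=U|S|$ with $U$ unitary. Put
\[
R=U|S|^{p-1}U^*=|S^*|^{p-1}\ge0 .
\]
Then $\|R\|_q=\|S\|_p^{p-1}$ because $q(p-1)=p$. We also have $\|S\|_p^p=\Tr(|S|^{p-1}U^*S)=\Tr(U^*RS)$.

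Next, decompose each summand on the other side: $A_k=Y_kV_k$ with $Y_k=|A_k^*|$ and $V_k$ unitary. Then $\Tr(U^*RA_k)=\Tr(V_kU^*RY_k)$, and therefore
\[
|\Tr(U^*RA_k)|\le\|V_kU^*\|_\infty\|RY_k\|_1=\|RY_k\|_1 .
\]
Summing over $k$ and applying (ii) to $R$ and the $Y_k$ gives
\[
\|S\|_p^p\le C\,\|S\|_p^{p-1}\Bigl\|\sum_k|A_k^*|\Bigr\|_p,
\qquad\text{so}\qquad
\|S\|_p\le C\Bigl\|\sum_k|A_k^*|\Bigr\|_p .
\]

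\textbf{Main obstacle.} This last bound has $|A_k^*|$ in place of $|A_k|$, because $R$ must sit next to a positive factor, so the positive part has to be placed on the left of each $A_k$. The fix is to apply the bound just proved to the family $A_1^*,\dots,A_m^*$. Since $|(A_k^*)^*|=|A_k|$ and $\|S^*\|_p=\|S\|_p$, this gives exactly \eqref{eq:m-summand-matrix}. The case $S=0$ is trivial.
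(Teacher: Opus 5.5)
Your proof is correct and is essentially the paper's argument: (i)$\Rightarrow$(ii) is the same, and (ii)$\Rightarrow$(i) uses the same duality-plus-polar-decomposition estimate, with your explicit norming element $|S^*|^{p-1}$ in place of the paper's supremum over $\|Z\|_q\leq 1$. The detour through $|A_k^*|$ and adjoints is valid but avoidable: you can put the positive factor on the right, $\|S\|_p^p=\Tr(U^*S|S|^{p-1})$, or, as the paper does, use $Z=WR$ and cyclicity $\Tr(RW^*U_k|A_k|)=\Tr(W^*U_k|A_k|R)$; either way $R$ sits directly next to $|A_k|$.
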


\begin{proof}
Assume first that \eqref{eq:m-summand-matrix} holds. Let
\[
 R,X_1,\ldots,X_m\geq0.
\]
For each $k$, the variational characterization of the trace norm
provides a unitary $U_k$ such that
\[
 \|RX_k\|_1=
 \|X_kR\|_1=
 \Re\Tr(U_kX_kR)=
 \Re\Tr(RU_kX_k).
\]
Consequently,
\[
 \begin{aligned}
 \sum_{k=1}^m\|RX_k\|_1
 =
 \Re\Tr\left(
   R\sum_{k=1}^mU_kX_k
 \right)\leq
 \|R\|_q
 \left\|\sum_{k=1}^mU_kX_k\right\|_p.
 \end{aligned}
\]
Since each $U_k$ is unitary,
\[
 |U_kX_k|=X_k.
\]
Applying \eqref{eq:m-summand-matrix} to
\(
 A_k=U_kX_k
\) therefore gives
\[
 \left\|\sum_{k=1}^mU_kX_k\right\|_p
 \leq
 C\left\|\sum_{k=1}^mX_k\right\|_p.
\]
Combining the preceding inequalities proves
\eqref{eq:m-summand-positive}.

Conversely, assume that \eqref{eq:m-summand-positive} holds. Let
\(
 A_k=U_k|A_k|
\)
be the polar decomposition of $A_k$, where the partial
isometry $U_k$ is a contraction on $\mathcal H$.

By Schatten duality,
\[
 \left\|\sum_{k=1}^mA_k\right\|_p
 =
 \sup_{\|Z\|_q\leq1}
 \left|
   \Tr\left(
     Z^*\sum_{k=1}^mU_k|A_k|
   \right)
 \right|.
\]
Fix $Z$ with $\|Z\|_q\leq1$, and write its polar decomposition as
\[
 Z=WR,
 \qquad
 R=|Z|\geq0,
 \qquad
 \|W\|_\infty\leq1.
\]
Then
\(
 \|R\|_q=\|Z\|_q\leq1 
\). For each $k$, 
\[
 \left|\Tr(Z^*U_k|A_k|)\right|
 =
 \left|\Tr(RW^*U_k|A_k|)\right|
 \leq
 \||A_k|R\|_1\|W^*U_k\|_\infty
 \leq
 \|R|A_k|\|_1.
\]
It follows from \eqref{eq:m-summand-positive} that
\[
 \left|
   \Tr\left(
     Z^*\sum_{k=1}^mU_k|A_k|
   \right)
 \right|
\leq
 \sum_{k=1}^m\big\|R|A_k|\big\|_1\leq
 C\left\|\sum_{k=1}^m|A_k|\right\|_p.
\]
Taking the supremum over all $Z$ with $\|Z\|_q\leq1$ proves
\eqref{eq:m-summand-matrix}.
\end{proof}

The following results are the key ingredients to establish the main theorem.
\begin{lemma}\label{lem:tensor}
Let $i\in\{1,\cdots,m\}$ and $H_i\in \mathcal{B}(\mathcal{H})_+$. Set
\[
 H=\left(\sum_{i=1}^m H_i^2\right)^{1/2}.
\]
Then
\begin{equation}\label{eq:tensor}
 \sum_{i=1}^m H_i\otimes\overline{H_i}
 \leq H\otimes\overline H.
\end{equation}
\end{lemma}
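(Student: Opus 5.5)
We will prove \eqref{eq:tensor} in two steps. First we move to a Hilbert--Schmidt picture, where the inequality becomes a statement about a completely positive map. Then we use a Perron--Frobenius type argument on that map. Throughout we use the standard unitary identification of $\mathcal H\otimes\overline{\mathcal H}$ with $M_d(\mathbb C)$, equipped with the Hilbert--Schmidt inner product $\langle X,Y\rangle=\Tr(X^*Y)$. It is given by $|u\rangle\otimes\overline{|v\rangle}\mapsto |u\rangle\langle v|$, and under it an operator $A\otimes\overline B$ corresponds to the map $X\mapsto AXB^*$. Hence \eqref{eq:tensor} is equivalent to
\[
 \sum_{i=1}^m\Tr\bigl(X^*H_iXH_i\bigr)\leq \Tr\bigl(X^*HXH\bigr)\qquad\text{for all }X\in M_d(\mathbb C).
\]

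\textbf{Reduction to invertible $H$.} If $H$ is singular, replace $H_i$ by $H_i+\varepsilon\id$. The new $H_\varepsilon=(\sum_i(H_i+\varepsilon)^2)^{1/2}$ is invertible, and both sides of \eqref{eq:tensor} depend continuously on $\varepsilon$, so we can let $\varepsilon\to0$ at the end. From now on assume $H>0$.

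\textbf{Conjugation by $H^{1/2}\otimes\overline{H^{1/2}}$.} Set $W_i=H^{-1/2}H_iH^{-1/2}\geq0$, so that $H_i=H^{1/2}W_iH^{1/2}$. Since $H_i\otimes\overline{H_i}=(H^{1/2}\otimes\overline{H^{1/2}})(W_i\otimes\overline{W_i})(H^{1/2}\otimes\overline{H^{1/2}})$, and conjugation by an invertible operator preserves the Loewner order, it suffices to show
\[
 T:=\sum_i W_i\otimes\overline{W_i}\leq \id\otimes\overline{\id}.
\]
Under the identification above, $T$ is the map $\Phi(X)=\sum_i W_iXW_i$. This map has three properties:
\begin{itemize}
\item it is completely positive, hence positive, on Hermitian matrices;
\item it is self-adjoint for the Hilbert--Schmidt inner product;
\item it is positive semidefinite as an operator, since $\langle X,\Phi(X)\rangle=\sum_i\|W_i^{1/2}XW_i^{1/2}\|_2^2\geq0$.
\end{itemize}
So it is enough to prove that the spectral radius satisfies $r(\Phi)\leq1$. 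The hypothesis $H^2=\sum_iH_i^2$ translates into $\sum_iH^{1/2}W_iHW_iH^{1/2}=H^2$, that is, $\Phi(H)=H$ with $H>0$.

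\textbf{Perron--Frobenius step (main point).} We show that a positive map with a strictly positive fixed point has spectral radius at most $1$. Because $\Phi$ is self-adjoint and maps Hermitian matrices to Hermitian matrices, its spectral radius is attained by a Hermitian eigenvector. For Hermitian $X$ choose $c$ with $-cH\leq X\leq cH$, which is possible because $H>0$. Positivity and $\Phi(H)=H$ then give
\[
 -cH\leq\Phi^n(X)\leq cH\quad\text{for all }n,
\]
so the iterates $\Phi^n(X)$ stay bounded. Every eigenvalue $\lambda$ with a Hermitian eigenvector $X$ therefore satisfies $|\lambda|\leq1$. Hence $r(\Phi)\leq1$, and since $\Phi$ is self-adjoint and positive semidefinite, $0\leq T\leq\id\otimes\overline{\id}$.

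I expect the only delicate point to be this spectral step: one has to check that self-adjointness lets us restrict to Hermitian eigenvectors, and that the bound is uniform in $n$. Everything else is bookkeeping with the vectorization identity.
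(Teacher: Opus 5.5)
Your proof is correct and follows essentially the paper's route. Both arguments reduce to invertible $H$, conjugate by $H^{1/2}\otimes\overline{H}^{\,1/2}$, and then show that the positive semidefinite operator $\sum_i W_i\otimes\overline{W_i}$ has spectral radius at most $1$ using only $\sum_i H_i^2=H^2$. The one difference is how that spectral bound is obtained: the paper uses similarity to the trace-preserving map $X\mapsto\sum_i H_iH^{-1}XH^{-1}H_i$, whose unital adjoint has norm one, whereas your fixed point $\Phi(H)=H$ with the order-interval argument proves the same fact directly (indeed $\Phi(H^{1/2}YH^{1/2})=H^{1/2}\mathcal E^*(Y)H^{1/2}$) and needs only positivity of $\Phi$.
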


\begin{proof}
It suffices to work on the support of $H$, where $H$ is
invertible.  Denote
\[
 L_i=H^{-1/2}H_iH^{-1/2},
 \qquad
 C_i=H_iH^{-1}=H^{1/2}L_iH^{-1/2}.
\]
Then
\[
 \sum_i C_i^*C_i
 =H^{-1}\left(\sum_iH_i^2\right)H^{-1}
 =\id.
\]
We now define
\begin{align*}
 \mathcal E\,:\, \mathcal{B(H)}&\to\mathcal{B(H)}
 \\  X\,\,&\mapsto \sum_i C_i X C_i^*.
\end{align*}
Hence $\mathcal E$ is a completely positive and trace preserving map on $\mathcal{B}(\mathcal{H})$.  Its adjoint is unital, completely positive and therefore has
operator norm one.  Hence the spectral radius of $\mathcal E$ is at
most one.  

Define
$$ {\rm vec}(X)=\sum X_{m,n} e_m\otimes e_n.$$
For any matrix $D$,
$$ {\rm vec}(D X D^*)=\sum_{a,b}\sum_{m,n} D_{a,m}X_{m,n}\overline{D_{b,n}}e_a\otimes e_b=(D\otimes \overline{D}){\rm vec}(X).$$
Hence the matrix representation of
$\mathcal E$ is
\[
 \sum_i C_i\otimes\overline{C_i}
\]
and it has spectral radius at most one.

On the other hand,
\begin{align*}
 \sum_i C_i\otimes\overline{C_i}
 &=
 (H^{1/2}\otimes\overline H^{\,1/2})
 \left(\sum_iL_i\otimes\overline{L_i}\right)
 (H^{-1/2}\otimes\overline H^{\,-1/2}).
\end{align*}
It is therefore similar to
\[
 M=\sum_iL_i\otimes\overline{L_i}.
\]
Each $L_i\otimes\overline{L_i}$ is positive semidefinite, so
$M\geq0$.  Similarity and the spectral-radius bound imply that all
eigenvalues of $M$ belong to $[0,1]$.  Hence $M\leq\id$.
Multiplying each side by $H^{1/2}\otimes\overline H^{\,1/2}$ gives
$$(H^{1/2}\otimes\overline H^{\,1/2} )M (H^{1/2}\otimes\overline H^{\,1/2})=\sum_{i=1}^{m}H_i\otimes \overline{H_i}\leq H\otimes \overline{H}. $$
This completes the proof.
\end{proof}

\begin{corollary}\label{cor:multi-families-tensor}
Let
\[
 E_{ak}\in\mathcal B(\mathcal H)_+,
 \qquad
 1\leq a\leq N,\quad 1\leq k\leq m.
\]
For each $k$, set
\[
 E_k
 =
 \left( E_{1k}^2+E_{2k}^2+\cdots+E_{Nk}^2\right)^{1/2}.
\]
Then
\begin{equation}\label{eq:multi-families-tensor}
 \sum_{a=1}^N
 \left(\sum_{k=1}^mE_{ak}\right)
 \otimes
 \overline{\left(\sum_{k=1}^mE_{ak}\right)}
 \leq
 \left(\sum_{k=1}^mE_k\right)
 \otimes
 \overline{\left(\sum_{k=1}^mE_k\right)}.
\end{equation}
\end{corollary}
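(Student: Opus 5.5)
We need to prove Corollary \ref{cor:multi-families-tensor}. The plan is to expand the left side bilinearly and apply Lemma~\ref{lem:tensor} once for each pair of indices $(k,l)$. Expanding,
\[
 \sum_{a=1}^N\Bigl(\sum_kE_{ak}\Bigr)\otimes\overline{\Bigl(\sum_lE_{al}\Bigr)}
 =\sum_{k,l=1}^m\sum_{a=1}^N E_{ak}\otimes\overline{E_{al}},
\]
and similarly the right side equals $\sum_{k,l}E_k\otimes\overline{E_l}$. For the diagonal terms $k=l$, Lemma~\ref{lem:tensor} with $H_a=E_{ak}$ gives $\sum_aE_{ak}\otimes\overline{E_{ak}}\le E_k\otimes\overline{E_k}$ directly. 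The off-diagonal terms $k\neq l$ are not individually positive semidefinite, so they cannot be compared term by term, and must be grouped.

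For the grouping we use a block-operator version of the lemma. Work on $\mathcal H\otimes\mathbb C^m$ and set $\widetilde H_a=\operatorname{diag}(E_{a1},\dots,E_{am})\ge0$. Then $\bigl(\sum_a\widetilde H_a^2\bigr)^{1/2}=\operatorname{diag}(E_1,\dots,E_m)=:\widetilde H$. Lemma~\ref{lem:tensor}, which holds on any finite-dimensional Hilbert space, gives
\[
 \sum_a\widetilde H_a\otimes\overline{\widetilde H_a}\le\widetilde H\otimes\overline{\widetilde H}
\]
on $(\mathcal H\otimes\mathbb C^m)\otimes(\overline{\mathcal H\otimes\mathbb C^m})$. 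The $((k,l),(k',l'))$ block of $\widetilde H_a\otimes\overline{\widetilde H_a}$ is $\delta_{kk'}\delta_{ll'}E_{ak}\otimes\overline{E_{al}}$, and likewise with $E_k\otimes\overline{E_l}$ on the right. So the inequality says that $\bigoplus_{k,l}\bigl(E_k\otimes\overline{E_l}-\sum_aE_{ak}\otimes\overline{E_{al}}\bigr)\ge0$, which holds for every pair $(k,l)$, including $k\neq l$.

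To finish, compress with the isometry-like map $V:\mathcal H\otimes\overline{\mathcal H}\to\bigoplus_{k,l}\mathcal H\otimes\overline{\mathcal H}$ given by $v\mapsto(v,v,\dots,v)$. Since $V^*(\bigoplus_{k,l}T_{kl})V=\sum_{k,l}T_{kl}$ and congruence preserves positivity, we get
\[
 \sum_{k,l}\Bigl(E_k\otimes\overline{E_l}-\sum_aE_{ak}\otimes\overline{E_{al}}\Bigr)\ge0,
\]
which is exactly \eqref{eq:multi-families-tensor} by the expansion in the first paragraph.

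The main point to check is the block identification in the second step: that the tensor product of block-diagonal operators is block-diagonal with blocks $E_{ak}\otimes\overline{E_{al}}$ under the canonical reordering of $(\mathcal H\otimes\mathbb C^m)\otimes(\overline{\mathcal H}\otimes\mathbb C^m)$. Also, the conjugation of $\widetilde H_a$ must be taken entrywise in a product basis, so that $\overline{\widetilde H_a}=\operatorname{diag}(\overline{E_{ak}})$. Both are routine bookkeeping.
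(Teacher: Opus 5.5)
Your proof is correct and is essentially the paper's argument: lift to $\mathcal H^{\oplus m}$ with $H_a=E_{a1}\oplus\cdots\oplus E_{am}$, apply Lemma~\ref{lem:tensor}, read off the $(k,\ell)$ diagonal blocks $\sum_a E_{ak}\otimes\overline{E_{a\ell}}\le E_k\otimes\overline{E_\ell}$, and sum over $k,\ell$; your compression by $V$ is just a formal way of adding these $m^2$ block inequalities. One small slip in your motivation: the off-diagonal terms $E_{ak}\otimes\overline{E_{al}}$ are in fact positive semidefinite, being tensor products of positive operators, and the real obstruction is only that Lemma~\ref{lem:tensor} does not directly compare their sum with $E_k\otimes\overline{E_l}$ when $k\neq l$.
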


\begin{proof}
Consider 
\(
 \mathcal K=\mathcal H^{\oplus m} 
\)
and define
\[
 H_a
 =
 E_{a1}\oplus\cdots\oplus E_{am}
 \in\mathcal B(\mathcal K)_+.
\]
Then
\[
 \left(H_1^2+\cdots+H_N^2\right)^{1/2}
 =
 E_1\oplus\cdots\oplus E_m.
\]
Applying Lemma~\ref{lem:tensor} to the family $(H_a)_{a=1}^N$
gives
\begin{equation}\label{eq:direct-sum-tensor}
 \sum_{a=1}^NH_a\otimes\overline{H_a}
 \leq
 (E_1\oplus\cdots\oplus E_m)
 \otimes
 \overline{(E_1\oplus\cdots\oplus E_m)}.
\end{equation}

Comparing each diagonal block on the two sides of \eqref{eq:direct-sum-tensor} gives, for every $1\leq k,\ell\leq m$,
\[
 \sum_{a=1}^N
 E_{ak}\otimes\overline{E_{a\ell}}
 \leq
 E_k\otimes\overline{E_\ell}.
\]
Summing over $k$ and $\ell$, one obtains
\[
 \sum_{k,\ell=1}^m\sum_{a=1}^N
 E_{ak}\otimes\overline{E_{a\ell}}
 \leq
 \sum_{k,\ell=1}^m
 E_k\otimes\overline{E_\ell}.
\]
Since
\[
 \sum_{k,\ell=1}^m
 E_{ak}\otimes\overline{E_{a\ell}}
 =
 \left(\sum_{k=1}^mE_{ak}\right)
 \otimes
 \overline{\left(\sum_{\ell=1}^mE_{a\ell}\right)},
\]
the preceding inequality is exactly
\eqref{eq:multi-families-tensor}.
\end{proof}

\begin{theorem}\label{thm:sf}
Let
\[
 E_{ak}\in\mathcal B(\mathcal H)_+,
 \qquad
 1\leq a\leq N,\quad 1\leq k\leq m.
\]
For every $2\leq p\leq \infty$,
\begin{equation}\label{eq:sf}
 \left(
   \sum_{a=1}^N
   \left\|\sum_{k=1}^mE_{ak}\right\|_p^p
 \right)^{1/p}
 \leq
 \left\|
   \sum_{k=1}^m
   \left(\sum_{a=1}^NE_{ak}^2\right)^{1/2}
 \right\|_p.
\end{equation}
\end{theorem}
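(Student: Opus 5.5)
The plan is to derive \eqref{eq:sf} from the tensor inequality of Corollary~\ref{cor:multi-families-tensor}. Write
\[
 S_a=\sum_{k=1}^mE_{ak},\qquad T=\sum_{k=1}^mE_k,\qquad E_k=\Bigl(\sum_{a=1}^NE_{ak}^2\Bigr)^{1/2},
\]
so that Corollary~\ref{cor:multi-families-tensor} reads
\[
 \sum_a S_a\otimes\overline{S_a}\leq T\otimes\overline T .
\]
The goal is $\sum_a\Tr S_a^p\leq\Tr T^p$.

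\emph{Step 1: the endpoints.} Both follow by testing the operator inequality against suitable vectors.
\begin{itemize}
\item For $p=\infty$, compress to product vectors $v\otimes\overline v$ with $v$ a top eigenvector of $S_a$. This gives $\langle v,S_av\rangle^2\leq\langle v,Tv\rangle^2$, hence $\|S_a\|_\infty\leq\|T\|_\infty$ for each $a$, and therefore $\max_a\|S_a\|_\infty\leq\|T\|_\infty$.
\item For $p=2$, pair with $\operatorname{vec}(\id)$. Since $\langle\operatorname{vec}X,(A\otimes\overline A)\operatorname{vec}X\rangle=\Tr(X^*AXA)$ for Hermitian $A$, the choice $X=\id$ yields $\sum_a\Tr S_a^2\leq\Tr T^2$.
\end{itemize}

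\emph{Step 2: general $p$ via the vec pairing.} For $2<p<\infty$, I would first pair with $\operatorname{vec}(X)$ for $X=T^{(p-2)/2}$. This gives
\[
 \sum_a\Tr\bigl(T^{\frac{p-2}{2}}S_aT^{\frac{p-2}{2}}S_a\bigr)\leq\Tr T^p .
\]
It remains to compare the left side with $\sum_a\Tr S_a^p$. Both sides are built from the same data, and the diagonal blocks of the corollary also give the cross inequalities $\sum_aE_{ak}\otimes\overline{E_{a\ell}}\leq E_k\otimes\overline{E_\ell}$. However, the weight $T^{(p-2)/2}$ is not a function of $S_a$, and no Araki--Lieb--Thirring type inequality points in the needed direction. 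I therefore expect this direct route to fail. The main obstacle is exactly here: upgrading a quadratic (tensor-square) operator inequality to a $p$-th power trace inequality.

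\emph{Step 3: fallback by interpolation.} Here I would use complex interpolation between Step~1's endpoints $p=2$ and $p=\infty$. The map $(E_{ak})\mapsto(S_a)_a$ is linear, but the right-hand norm $\bigl\|\sum_k(\sum_aE_{ak}^2)^{1/2}\bigr\|_p$ lives on a positive cone. To make interpolation available, I would reformulate it as a norm on a column space. Form the column operators $c_k=\sum_a e_a\otimes E_{ak}$, so that $|c_k|=E_k$. The right side is then $\|\sum_k|c_k|\|_p$, while the left side is $\|\bigoplus_aS_a\|_p$, i.e.\ a mixed $\ell^p(S_p)$ norm. I would then run a Stein-type analytic family $z\mapsto$ (powers of the polar parts of the $c_k$), with $\theta=2/p$, and check the two boundary lines using Step~1. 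Making the right-hand side behave well under this analytic family, which is a triangle-type quantity for absolute values rather than a genuine norm of a linear argument, is where I expect the real difficulty. If it fails, I would try to exploit that the inequality is homogeneous and monotone, and reduce to commuting $E_k$ via a pinching or majorization argument.
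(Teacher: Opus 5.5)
There is a genuine gap: your proposal contains no working argument for $2<p<\infty$. The endpoint facts in Step~1 are correct. However, they are inequalities for one fixed configuration $(E_{ak})$, not norm bounds for a linear operator, so complex interpolation has nothing to act on. Step~3 tries to interpolate in the data itself. As you note, $\|\sum_k|c_k|\|_p$ is not the norm of a linear function of the $E_{ak}$, so the Stein-type family is never specified, and the pinching/majorization fallback is not an argument.

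Your Step~2 is better than you think, though only in a limited range. Set $r=(p-2)/2\in[0,1]$. Your own $p=\infty$ computation gives $S_a\le T$, and L\"owner--Heinz then yields $S_a^r\le T^r$. Hence $\Tr S_a^p\le\Tr(T^rS_a^{r+2})\le\Tr(T^rS_aT^rS_a)$, and summing over $a$ closes the argument for $2\le p\le4$. For $p>4$ the map $t\mapsto t^r$ is not operator monotone, and this breaks.

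The missing idea is to freeze the data and interpolate in a new variable. Since $0\le S_a\le T$, on the support of $T$ the operators $K_a=T^{-1/2}S_a^{1/2}$ are contractions with $K_a^*TK_a=S_a$. Consider the linear map $\Phi(Y)=(K_a^*YK_a)_{a=1}^N$. It is contractive $\mathcal S_\infty\to\ell_\infty(\mathcal S_\infty)$ because $\|K_a\|_\infty\le1$. It is contractive $\mathcal S_2\to\ell_2(\mathcal S_2)$ because $\sum_a\|K_a^*YK_a\|_2^2=\langle\operatorname{vec}Y,(\sum_aM_a\otimes\overline{M_a})\operatorname{vec}Y\rangle$ with $M_a=T^{-1/2}S_aT^{-1/2}$, and conjugating the Corollary by $T^{-1/2}\otimes\overline T^{\,-1/2}$ gives $\sum_aM_a\otimes\overline{M_a}\le\id$. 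This uses the tensor inequality against every vector $\operatorname{vec}Y$, not only $\operatorname{vec}(\id)$ as in your Step~1. Complex interpolation with $\theta=1-2/p$ then gives $\|\Phi(Y)\|_{\ell_p(\mathcal S_p)}\le\|Y\|_p$, and evaluating at $Y=T$ yields the desired inequality. This is exactly the paper's argument.
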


\begin{proof}
For $1\leq k\leq m$, put
\[
 E_k
 =
 \left(E_{1k}^2+\cdots+E_{Nk}^2\right)^{1/2},
\]
and set
\[
 S_a=\sum_{k=1}^mE_{ak},
 \qquad
 S=\sum_{k=1}^mE_k.
\]
Since
\(
 E_{ak}^2\leq E_k^2
\), operator monotonicity of the square root gives
\(
 E_{ak}\leq E_k
\). Summing over $k$, we obtain
\begin{equation}\label{eq:Sa-less-S}
 0\leq S_a\leq S.
\end{equation}
This directly proves the assertion for $p=\infty$.  For $p\neq \infty$, we may restrict all operators to the support of $S$. Define
\[
 K_a=S^{-1/2}S_a^{1/2},
\]
and consider the linear map
\[
\begin{aligned}
 \Phi:\mathcal S_\infty
 &\longrightarrow
 \ell_\infty(\mathcal S_\infty),
 \\
 T&\longmapsto \bigl(K_a^*TK_a\bigr)_{a=1}^N.
\end{aligned}
\]
By \eqref{eq:Sa-less-S},
\[
 K_aK_a^*
 =
 S^{-1/2}S_aS^{-1/2}
 \leq\id.
\]
Hence $\|K_a\|_\infty\leq1$, and therefore
\begin{equation}\label{eq:multi-infty-contraction}
 \|\Phi(T)\|_{\ell_\infty(\mathcal S_\infty)}
 =
 \max_a\|K_a^*TK_a\|_\infty
 \leq
 \|T\|_\infty.
\end{equation}

Set
\[
 M_a
 =
 K_aK_a^*
 =
 S^{-1/2}S_aS^{-1/2}.
\]
By Corollary~\ref{cor:multi-families-tensor},
\[
 \sum_{a=1}^NS_a\otimes\overline{S_a}
 \leq
 S\otimes\overline S.
\]
Multiplying each side with
\(
 S^{-1/2}\otimes\overline S^{\,-1/2}
\)
gives
\[
 \sum_{a=1}^NM_a\otimes\overline{M_a}
 \leq
 \id.
\]
This yields
\begin{equation} \label{eq:multi-two-contraction}
\begin{aligned}
 \|\Phi(T)\|_{\ell_2(\mathcal S_2)}^2
 &=
 \sum_{a=1}^N\|K_a^*TK_a\|_2^2
\\
 &=
 \sum_{a=1}^N
 \Tr(T^*M_aTM_a)\\
 &=
 \left\langle
   \operatorname{vec}(T),
   \left(
     \sum_{a=1}^NM_a\otimes\overline{M_a}
   \right)
   \operatorname{vec}(T)
 \right\rangle
\\
 &\leq
 \|T\|_2^2.
\end{aligned}
\end{equation}
Thus
\[
 \Phi:\mathcal S_2\longrightarrow\ell_2(\mathcal S_2),\quad \Phi:\mathcal S_\infty
 \longrightarrow
 \ell_\infty(\mathcal S_\infty)
\]
are both contractions. Note that
$$ \ell_p(\mathcal{S}_p)=L_p(\mathcal{N}),\quad \mathcal N=\mathcal{B}(\mathcal{H})^{\oplus N}$$
where $\mathcal{N}$ is a finite von Neumann algebra. Complex interpolation gives
\[
 [\mathcal S_2,\mathcal S_\infty]_\theta
 =
 \mathcal S_p,\qquad
 [
   \ell_2(\mathcal S_2),
   \ell_\infty(\mathcal S_\infty)
 ]_\theta
 =
 \ell_p(\mathcal S_p),
 \qquad
 \theta=1-\frac{2}{p}.
\]
Interpolating
\eqref{eq:multi-infty-contraction} and
\eqref{eq:multi-two-contraction}, we obtain
\[
 \|\Phi(T)\|_p=\left(
   \sum_{a=1}^N
   \|K_a^*TK_a\|_p^p
 \right)^{1/p}
 \leq
 \|T\|_p.
\]
Finally,
\[
 K_a^*SK_a
 =
 S_a^{1/2}S^{-1/2}SS^{-1/2}S_a^{1/2}
 =
 S_a.
\]
Taking $T=S=E_1+\cdots+E_m$ proves the desired result.
\end{proof}

\section{Proof of the main theorem}
We first prove the rank-$m$ result.

\begin{lemma}\label{lem:rank-m}
Let $2\leq p\leq \infty$ and $q=p/(p-1)$.
Let $X_1,\ldots,X_m,R\in\mathcal B(\mathcal H)_+$, and suppose that
\[
 S:=\sum_{k=1}^mX_k,
 \qquad
 \rank S\leq m.
\]
Then
\[
 \sum_{k=1}^m\|RX_k\|_1
 \leq
 C_{p,m}\|R\|_q\|S\|_p.
\]
Consequently, for $A_1,\cdots,A_m \in \mathcal{B}(\mathcal{H})$ where $|A_k|=X_k$,
\begin{equation*}
 \left\|\sum_{k=1}^mA_k\right\|_p
 \leq
 C_{p,m}\left\|\sum_{k=1}^m|A_k|\right\|_p.
\end{equation*}
\end{lemma}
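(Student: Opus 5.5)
The plan is to reduce to an $m$-dimensional space, bound the left-hand side by Cauchy--Schwarz, and then solve a finite-dimensional scalar optimization. The second assertion follows from the first exactly as in the direction (ii)$\Rightarrow$(i) of Proposition~\ref{prop:m-summand-equivalence}. In that argument one may write $A_k=U_kX_k$ and dualize; the pairing operator $R=|Z|$ is arbitrary, but the $X_k$ keep the same sum $S$. So the rank hypothesis survives, and only the first inequality needs proof.

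\emph{Step 1 (reduction to $\dim\le m$).} Let $P$ be the projection onto the range of $S$, so $\rank P\le m$. Since $0\le X_k\le S$, we have $X_k=PX_kP$, and hence $RX_k=(RP)X_k$. Put $R'=|RP|=(PR^2P)^{1/2}$. Then $|R'X_k|^2=X_kPR^2PX_k=|RX_k|^2$, so $\|RX_k\|_1=\|R'X_k\|_1$. Moreover $\|R'\|_q=\|RP\|_q\le\|R\|_q$. Hence we may replace $R$ by $R'$ and work on $P\mathcal H$, a space of dimension at most $m$, on which all of $R,X_1,\dots,X_m,S$ live.

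\emph{Step 2 (Cauchy--Schwarz).} Writing $RX_k=(RX_k^{1/2})X_k^{1/2}$, Hölder gives $\|RX_k\|_1\le\|RX_k^{1/2}\|_2\|X_k^{1/2}\|_2$. Applying Cauchy--Schwarz to the sum over $k$ yields
\[
 \sum_{k=1}^m\|RX_k\|_1\le\Bigl(\sum_k\Tr(R^2X_k)\Bigr)^{1/2}\Bigl(\sum_k\Tr X_k\Bigr)^{1/2}
 =\bigl(\Tr(R^2S)\bigr)^{1/2}(\Tr S)^{1/2}.
\]
Now let $a_1\ge\dots\ge a_m\ge0$ be the eigenvalues of $R^2$ and $s_1\ge\dots\ge s_m\ge0$ those of $S$ (padding with zeros). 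Von Neumann's trace inequality gives $\Tr(R^2S)\le\sum_i a_is_i$. It therefore suffices to show
\[
 F(a,s)=\Bigl(\sum_{i=1}^m a_is_i\Bigr)\Bigl(\sum_{i=1}^m s_i\Bigr)\le C_{p,m}^2
 \quad\text{whenever}\quad \sum_i a_i^{q/2}=1,\ \sum_i s_i^p=1 .
\]

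\emph{Step 3 (the scalar problem; main obstacle).} For fixed $s$, the constraint $\sum_i a_i^{q/2}=1$ with $q/2\le1$ makes the maximum over $a$ equal to $\max_i s_i=s_1$. This is a supremum of a linear functional over the $\ell_{q/2}$ unit ball with $q/2\le 1$, whose extreme points are the coordinate vectors. The problem thus becomes maximizing $G(s)=s_1\sum_i s_i$ subject to $\sum_i s_i^p=1$.

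For fixed $s_1$, the power-mean inequality (concavity) shows that $s_2+\dots+s_m$ is maximized when $s_2=\dots=s_m=t$. By homogeneity, write $s=(x,1,\dots,1)/(x^p+m-1)^{1/p}$, which gives
\[
 G=\frac{x(x+m-1)}{(x^p+m-1)^{2/p}}.
\]
One must also check that the normalization $t=1$ loses nothing: the case $t=0$ gives $G=1\le C_{p,m}^2$, which is seen by taking $x$ large in the formula. Differentiating $\log G$ in $x$ yields the critical-point equation
\[
 \frac{1}{x}+\frac{1}{x+m-1}=\frac{2x^{p-1}}{x^p+m-1}.
\]
I expect this to simplify to $x^p=2x+m-1$, using $p\ge2$. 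The step I expect to be hardest is confirming that this critical point, with $x>1$, is the global maximum on $(0,\infty)$. Here the condition $p\ge2$ should control the sign of the derivative. The maximal value is then $C_{p,m}^2$ by \eqref{eq:Cp}, and taking square roots finishes the proof; the case $p=\infty$ follows by the same computation or by continuity.
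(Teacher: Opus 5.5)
Your proposal is essentially the paper's proof: Cauchy--Schwarz down to $\sqrt{\Tr(R^2S)\Tr S}$, then the bound $\Tr(R^2S)\le s_1\Tr R^2\le s_1\|R\|_q^2$, then averaging $s_2,\dots,s_m$ and a one-variable optimization, with the second assertion by duality. The bound on $\Tr(R^2S)$ is the only place $p\ge2$ (i.e.\ $q\le2$) is used, and your dimension reduction and von Neumann/extreme-point argument are just a longer route to it. The step you left open is routine and does not need $p\ge2$: $\frac{d}{dx}\log G=\frac{(m-1)(2x+m-1-x^p)}{x(x+m-1)(x^p+m-1)}$, and $x\mapsto x^p-2x-(m-1)$ is convex, negative at $x=0$ and $x=1$, and tends to $+\infty$, so it changes sign exactly once, at $x_{p,m}>1$, which is therefore the global maximizer of $G$ on $(0,\infty)$.
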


\begin{proof}
For positive semidefinite $R$ and $X$, Cauchy--Schwarz inequality gives
\[
 \begin{aligned}
 \|RX\|_1=
 \|RX^{1/2}X^{1/2}\|_1\leq
 \|RX^{1/2}\|_2\|X^{1/2}\|_2=
 \sqrt{\Tr(R^2X)\Tr X}.
 \end{aligned}
\]
Applying this estimate to every $X_k$ and then using scalar
Cauchy--Schwarz inequality, we obtain
\begin{equation}\label{eq:c-s}
\begin{aligned}
 \sum_{k=1}^m\|RX_k\|_1
 &\leq
 \sum_{k=1}^m
 \sqrt{\Tr(R^2X_k)\Tr X_k}\\
 &\leq
 \sqrt{
   \left(\sum_{k=1}^m\Tr(R^2X_k)\right)
   \left(\sum_{k=1}^m\Tr X_k\right)
 }\\
 &=
 \sqrt{\Tr(R^2S)\Tr S}.
\end{aligned}
\end{equation}

Let
\[
 \|S\|_\infty=s_1\geq s_2\geq\cdots\geq s_m\geq0
\]
be the eigenvalues of $S$, padded with zeros if necessary.
Since $S\leq s_1\id$ and $1\leq q\leq2$,
\[
 \Tr(R^2S)
 \leq
 s_1\Tr R^2
 =
 s_1\|R\|_2^2
 \leq
 s_1\|R\|_q^2.
\]
It follows that
\begin{equation}\label{eq:rank-r-intermediate}
 \sum_{k=1}^m\|RX_k\|_1
 \leq
 \|R\|_q\sqrt{s_1\Tr S}.
\end{equation}

First let $p\neq \infty$. We are going to check that
\begin{equation}\label{eq:rank-r-scalar}
 \sqrt{s_1 \Tr S}=\sqrt{s_1(s_1+\cdots+s_m)}
 \leq
 C_{p,m}
 \left(\sum_{j=1}^m s_j^p\right)^{1/p}.
\end{equation}
If
$s_2=\cdots=s_m=0$, it holds trivially. Otherwise, put
\[
 \beta
 =
 \frac{s_2+\cdots+s_m}{m-1},
 \qquad
 x=\frac{s_1}{\beta}.
\]
Since $s_j\leq s_1$ for every $j$, we have $x\geq1$. Convexity of
$t\mapsto t^p$ gives
\[
 s_2^p+\cdots+s_m^p
 \geq
 (m-1)\beta^p.
\]
Hence we obtain
$$s_1^p+s_2^p+\cdots+s_m^p
 \geq
 (x\beta)^p+ (m-1)\beta^p=\beta^p(x^p+m-1).$$ 
 Consequently,
\[
 \frac{
   \sqrt{s_1(s_1+\cdots+s_m)}
 }{
   (s_1^p+\cdots+s_m^p)^{1/p}
 }
 \leq \frac{\sqrt{x(x+m-1)}}
 {(x^p+m-1)^{1/p}}=
 h_{p,m}(x).
\]
A direct logarithmic differentiation gives
\[
 \frac{d}{dx}\ln h_{p,m}(x)
 =
 \frac{
   (m-1)(2x+m-1-x^p)
 }{
   2x(x+m-1)(x^p+m-1)
 }.
\]
Thus $h_{p,m}$ attains its maximum at the point $x_{p,m}\in (1,\infty)$ where
\[
 x_{p,m}^{\,p}=2x_{p,m}+m-1.
\]
Hence
\[
 \max_{x\geq1}h_{p,m}(x)
 =
 h_{p,m}(x_{p,m})
 =
 C_{p,m},
\]
which proves \eqref{eq:rank-r-scalar}. 

For $p=\infty$, we use
\(
 \Tr S\leq m s_1
\)
in \eqref{eq:rank-r-intermediate} with $q=1$, and obtain
\[
 \sum_{k=1}^m\|RX_k\|_1
 \leq
 \sqrt m\,\|R\|_1\|S\|_\infty.
\]
Combining it with \eqref{eq:rank-r-scalar} gives
\[ \sum_{k=1}^m\|RX_k\|_1
 \leq
 C_{p,m}\|R\|_q\|S\|_p\]
for $2\leq p\leq \infty$. This completes the proof for the first assertion.

To prove the second assertion, take a matrix $W$ with
$\|W\|_q\leq1$ and write its polar decomposition as $W=VR$, where
$R=|W|\geq0$ and $V$ is a partial isometry.  Also write the polar decomposition 
$A_k=U_k|A_k|=U_k X_k$. Then
\begin{align*}
 \bigl|\Tr W^*(U_1|A_1|+\cdots+U_m|A_m|)\bigr|
 &\leq \|R\,|A_1|\|_1+\cdots+\|R\,|A_m|\|_1\\
 &\leq C_{p,m}\|R\|_q\||A_1|+\cdots|A_m|\|_p\\
 &\leq C_{p,m}\||A_1|+\cdots|A_m|\|_p.
\end{align*}
Schatten duality then proves the second assertion.
\end{proof}

Now we prove the main theorem.
\begin{proof}[Proof of Theorem~\ref{thm:main}]
Write the polar decompositions
\(
 A_k=U_k|A_k|
\), extend each $U_k$ to unitary when necessary,
and put
\[
 Z=A_1+\cdots +A_m=U_1|A_1|+\cdots+U_m|A_m|.
\]
Choose a complete orthonormal family of left singular vectors $u_i$ of
$Z$, let $z_i$ be the corresponding singular values, and set
\[
 Q_i=|u_i\rangle\langle u_i|.
\]
Define
\[
 X_{ik}=|Q_iA_k|=(|A_k|U_k^*Q_iU_k|A_k|)^{1/2},
\]

The matrix $Q_iZ$ is at most rank-one and its only singular
value is $z_i$.  Let
\[
 Q_iA_k=\widetilde U_{ik}X_{ik}
\]
be polar decompositions.  Since
\(
 \rank X_{ik}\leq1
\), 
we have 
$$\rank(X_{i1}+\cdots X_{im})\leq m.$$  
Then, applying Lemma~\ref{lem:rank-m} gives
\[
 z_i=\|Q_iZ\|_p
 =\left\|\sum_{k=1}^{m}Q_iA_k\right\|_p=\left\|\sum_{k=1}^{m}\widetilde{U}_{ik}X_{ik}\right\|_p\leq C_{p,m}\left\|X_{i1}+\cdots+X_{im}\right\|_p.
\]
Summing over all singular values we obtain
\begin{align*}
 \sum_i z_i^p=\|A_1+\cdots+A_m\|_p^p
 \leq C_{p,m}^p\sum_i\|X_{i1}+\cdots+X_{im}\|_p^p.
\end{align*}

Besides, the identity $\sum_iQ_i=\id$ yields, for each $k$,
\[
 \sum_i X_{ik}^2
 =|A_k|U_k^*\left(\sum_iQ_i\right)U_k|A_k|
 =|A_k|^2.
\]
Then Theorem~\ref{thm:sf} gives
\begin{align*}
 \sum_i \left\|\sum_{k=1}^{m}X_{ik} \right\|_p^p\leq \left\|
   \sum_{k=1}^{m}\left(\sum_iX_{ik}^2\right)^{1/2}
 \right\|_p^p=\||A_1|+\cdots+|A_m|\|_p^p.
\end{align*}
This proves \eqref{eq:main}. 

% It remains to verify sharpness.  Choose unit vectors $x,y$ such that
% \[
%  \langle x,y\rangle
%  =c_p:=\frac{x_p-1}{x_p+1},
% \]
% and a unit vector $\alpha$.  Let
% \[
%  A=|\alpha\rangle\langle x|,
%  \qquad
%  B=|\alpha\rangle\langle y|.
% \]
% Then $A+B=|\alpha\rangle\langle x+y|$, whereas
% $|A|+|B|=|x\rangle\langle x|+|y\rangle\langle y|$ has nonzero
% eigenvalues $1+c_p$ and $1-c_p$.  Hence
% \[
%  \frac{\|A+B\|_p}{\||A|+|B|\|_p}
%  =
%  \frac{\sqrt{2(1+c_p)}}
%  {\bigl((1+c_p)^p+(1-c_p)^p\bigr)^{1/p}}
%  =
%  \frac{\sqrt{x_p(x_p+1)}}{(x_p^p+1)^{1/p}}
%  =C_p.
% \]
% Thus the constant is optimal.  Letting $p\to\infty$ gives
% $C_{\infty,m}=\sqrt{m}$ and finishes the proof.

It remains to verify sharpness. Choose ${\rm dim}(\mathcal H)\geq m$. For $2\leq p< \infty$, set
\[
 c_{p,m}
 =
 \frac{x_{p,m}-1}{x_{p,m}+m-1}.
\]
Choose unit vectors $v_1,\ldots,v_m\in\mathcal H$ where
\[
 \langle v_j,v_k\rangle
 =
 \begin{cases}
  1,&j=k,\\
  c_{p,m},&j\neq k.
 \end{cases}
\]
Such vectors exist because the related Gram matrix
is positive definite. Let $\alpha\in\mathcal H$ be a unit vector and
define
\[
 A_k=|\alpha\rangle\langle v_k|,
 \qquad
 1\leq k\leq m.
\]
Then
\[
 \left\|\sum_{k=1}^mA_k\right\|_p
 =
 \left\|\sum_{k=1}^mv_k\right\|_{\mathcal{H}}
 =
 \sqrt{m\bigl(1+(m-1)c_{p,m}\bigr)}.
\]

On the other hand, \(
 |A_k|=|v_k\rangle\langle v_k|
\). Hence
\[
 \sum_{k=1}^m|A_k|
 =
 \sum_{k=1}^m|v_k\rangle\langle v_k|.
\]
Its nonzero eigenvalues are
\[
 1+(m-1)c_{p,m},\,
 1-c_{p,m},\,\cdots,\,1-c_{p,m}
\]
where $1-c_{p,m}$ repeats $m-1$ times. Therefore
\[
 \left\|\sum_{k=1}^m|A_k|\right\|_p
 =
 \left(
   \bigl(1+(m-1)c_{p,m}\bigr)^p
   +(m-1)(1-c_{p,m})^p
 \right)^{1/p}.
\]
Consequently,
\begin{align*}
 \frac{
   \left\|\sum_{k=1}^mA_k\right\|_p
 }{
   \left\|\sum_{k=1}^m|A_k|\right\|_p
 }
 &=
 \frac{
   \sqrt{m\bigl(1+(m-1)c_{p,m}\bigr)}
 }{
   \left(
     \bigl(1+(m-1)c_{p,m}\bigr)^p
     +(m-1)(1-c_{p,m})^p
   \right)^{1/p}
 }.
\end{align*}
The right-hand side is exactly
\begin{align*}
 \frac{\sqrt{x_{p,m}(x_{p,m}+m-1)}}
 {(x_{p,m}^p+m-1)^{1/p}}=
 C_{p,m}.
\end{align*}

At the $p=\infty$ case, choose
$v_1,\ldots,v_m$ to be orthonormal. Then
\[
 \left\|\sum_{k=1}^mA_k\right\|_\infty
 =
 \sqrt m,
 \qquad
 \left\|\sum_{k=1}^m|A_k|\right\|_\infty
 =
 1.
\]
This completes the proof.
\end{proof}

By Proposition~\ref{prop:m-summand-equivalence}, the result has the following equivalent quasi-reverse Minkowski form:
\begin{corollary}\label{cor:positive}
Let $2\leq p\leq \infty$, $q=p/(p-1)$. Let $R,X_1,\cdots,X_m\in \mathcal{B}(\mathcal{H})_+$.  Then
\[
 \|RX_1\|_1+\cdots+\|RX_m\|_1
 \leq C_{p,m}\|R\|_q\|X_1+\cdots+X_m\|_p.
\]
The constant $C_{p,m}$ is optimal over all $\dim(\mathcal{H})<\infty$.
\end{corollary}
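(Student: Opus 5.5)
The plan is to derive Corollary~\ref{cor:positive} directly from Theorem~\ref{thm:main} through Proposition~\ref{prop:m-summand-equivalence}, treating the endpoint $p=\infty$ separately because the proposition only covers $1<p<\infty$.

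For $2\leq p<\infty$, Theorem~\ref{thm:main} says that assertion (i) of Proposition~\ref{prop:m-summand-equivalence} holds with $C=C_{p,m}$. The implication (i)$\Rightarrow$(ii) then gives the stated inequality for all $R,X_1,\ldots,X_m\geq0$ at once.

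For $p=\infty$, so $q=1$, I would repeat the (i)$\Rightarrow$(ii) argument by hand. Choose unitaries $U_k$ with $\|RX_k\|_1=\Re\Tr(RU_kX_k)$. Summing over $k$ and using the Hölder pairing between $\mathcal S_1$ and $\mathcal S_\infty$ gives
\[
\sum_k\|RX_k\|_1\leq\|R\|_1\Bigl\|\sum_kU_kX_k\Bigr\|_\infty .
\]
Since $|U_kX_k|=X_k$, the case $p=\infty$ of Theorem~\ref{thm:main} bounds the right-hand side by $\sqrt m\,\|R\|_1\|\sum_kX_k\|_\infty$. This works because the proof of (i)$\Rightarrow$(ii) uses nothing that fails at $p=\infty$. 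Alternatively, this endpoint can be read off from Lemma~\ref{lem:rank-m}'s intermediate bound $\sqrt{s_1\Tr S}$ with $\Tr S\leq ms_1$.

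For optimality, I use the implication (ii)$\Rightarrow$(i) at the same constant. If the corollary held with some $C<C_{p,m}$ for every finite dimension, then (i) would hold with that $C$, contradicting the sharpness part of Theorem~\ref{thm:main}. At $p=\infty$ the converse direction of the proposition also only uses duality, here between $\mathcal S_\infty$ and $\mathcal S_1$, so the same reasoning applies.

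One could instead check sharpness directly. Take the extremal vectors $v_k$ and set $X_k=|v_k\rangle\langle v_k|$, with $R$ the rank-one projection onto $\sum_kv_k$ normalised appropriately. This is only a sanity check, and the equivalence argument is the cleaner route.

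The only delicate point is the endpoint $p=\infty$, which lies outside the stated range of Proposition~\ref{prop:m-summand-equivalence}. There I would simply rerun its short proof, noting that every step remains valid with $q=1$.
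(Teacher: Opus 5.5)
Your argument is correct and follows the paper's route: the paper obtains Corollary~\ref{cor:positive} by transferring Theorem~\ref{thm:main}, both the inequality and its sharpness, through Proposition~\ref{prop:m-summand-equivalence}. Your explicit rerun of the duality argument with $q=1$ properly covers the endpoint $p=\infty$, which lies outside the proposition's stated range $1<p<\infty$. One caution: drop the ``alternative'' based on the intermediate bound $\|R\|_q\sqrt{s_1\Tr S}$ of Lemma~\ref{lem:rank-m}. The estimate $\Tr S\leq m s_1$ uses $\rank S\leq m$, which general $X_k$ need not satisfy; for $S=\id$ in dimension $d>m$ that route yields only the constant $\sqrt d$.
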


\section{Counterexamples for \texorpdfstring{$1<p<2$}{1 < p < 2}}
\label{sec:below-two}
In this section, we only discuss the $2$-summand case and simply write $C_p=C_{p,2},\, x_p=x_{p,m}$. 

We first present a numerical counterexample with $p = 3/2$. Let
\begin{align}\label{eq:numeric-ce}
 A=\begin{pmatrix}1&0\\0&0\end{pmatrix},
 \qquad
 B=\begin{pmatrix}
       72/125&96/125\\
       21/125&28/125
    \end{pmatrix}.
  \end{align}
Both matrices have rank one.  The eigenvalues of $|A|+|B|$ are
$8/5,2/5$, and the squared singular values of $A+B$ are
$392/125,2/125$.  Hence
\begin{equation*}
 \frac{\|A+B\|_{3/2}}{\bigl\||A|+|B|\bigr\|_{3/2}}
 =
 \frac{
  \left[(392/125)^{3/4}+(2/125)^{3/4}\right]^{2/3}}
 {\left[(8/5)^{3/2}+(2/5)^{3/2}\right]^{2/3}}
 \approx1.036194814532626.
\end{equation*}
The solution of $x^{3/2}=2x+1$ is
$x\approx4.864536512317585$, so the proposed value is only
\[
 C_{3/2}
 =\frac{\sqrt{x(x+1)}}{(x^{3/2}+1)^{2/3}}
 \approx1.034653951851434.
\]
Thus \eqref{eq:numeric-ce} is a strict counterexample.

In fact, let $A=|\alpha\rangle\langle\xi|$ and
$B=|\beta\rangle\langle\eta|$, where all four vectors are real unit
vectors and
\[
 c=\langle\xi,\eta\rangle\in[0,1],
 \qquad
 d=\langle\alpha,\beta\rangle\in[0,1].
\]
Then one can compute
\begin{equation*}
 \frac{\|A+B\|_p}{\bigl\||A|+|B|\bigr\|_p}=
 \frac{
  \left((1+c)^{p/2}(1+d)^{p/2}
       +(1-c)^{p/2}(1-d)^{p/2}\right)^{1/p}}
 {\left((1+c)^p+(1-c)^p\right)^{1/p}}=F_p(c,d).
\end{equation*}
Put $r=p/2$. For fixed $c\in(0,1)$, one can check that
$$\partial_d F_p(c,d)|_{d=0}>0,\quad \lim_{d\uparrow 1}\partial_d F_p(c,d)\to-\infty.$$
Hence for fixed $c$, the unique
maximizer of $F_p(c,\cdot)$ is interior and is achieved at 
\begin{equation*}
 \tilde{d}_c=\frac{(\frac{1+c}{1-c})^{p/(2-p)}-1}{(\frac{1+c}{1-c})^{p/(2-p)}+1}.
\end{equation*}
Now let $x_p>1$ solve $x_p^p=2x_p+1$ and choose
\[
 c=\frac{x_p-1}{x_p+1},
 \qquad d=\tilde{d}_c.
\]
Then one obtains 
\[
 \max_{c} F_p(c,d)>\max_{c} F_p(c,1)=C_p.
\]
Real unit vectors with inner products $c$ and $d$ realize this
quotient.

We conclude the unsolved question as following:
\begin{question}\label{open}
For every $1<p<2$ and $A_1,\cdots,A_m\in \mathcal{B(H)}$. What is the sharp constant $\widehat C_{p,m}$ that makes
\[
 \|A_1+\cdots+A_m\|_p
 \leq\widehat C_{p,m}\bigl\||A_1|+\cdots+|A_m|\bigr\|_p\,?
\]
\end{question}

Moreover, in the $2$-summand case, we point out that the maximum of $F_p(c,d)$ is still not the
correct candidate for the sharp constant.  Indeed, for $p=6/5$, let
\[
 A=\begin{pmatrix}
  1&0&0\\0&0&0\\0&0&0
 \end{pmatrix},
 \qquad
 B=\begin{pmatrix}
  0.68128169&0.59021564&-0.00149331\\
  0.32727794&0.28426802&0\\
  0&0&0
 \end{pmatrix}.
\]
A direct computation gives
\[
  \frac{\|A+B\|_{6/5}}{\bigl\||A|+|B|\bigr\|_{6/5}}
\approx1.00733342639935,
\]
while
\[
  \max_{0\leq c,d\leq 1}F_{6/5}(c,d)
  \approx 1.00732496576788.
\]
As a result, the universal sharp constant in Question~\ref{open} might not be achieved at a specific case and the extremal value may increase as the dimension of the ambient Hilbert space grows.

\section*{}

\noindent{\bf Acknowledgement.} The research of Hongsen Qiu is supported by the National Natural Science Foundation of China (Grant No. 12371138 and No. W2441002).  The author is grateful to Haonan Zhang for offering valuable suggestions, which improves the result of this paper. The author would also like to thank GPT-5.6-Sol for giving the two numerical couterexamples, language polishing and TeX editing.

\end{document}